\documentclass[12pt]{article}

\usepackage[T1]{fontenc}
\usepackage{lmodern}
\usepackage{microtype}
\usepackage{geometry}
\usepackage{amsmath,amssymb,amsthm}

\allowdisplaybreaks

\newtheorem{theorem}{Theorem}
\newtheorem{lemma}[theorem]{Lemma}

\newcommand{\Z}{\mathbb Z}
\newcommand{\Pres}{\mathsf{Pres}}
\newcommand{\Lim}{\mathsf{Lim}}
\newcommand{\Colim}{\mathsf{Colim}}
\newcommand{\Blim}{\mathsf{Blim}}
\newcommand{\im}{\operatorname{Im}}
\newcommand{\fideal}{\mathbf f}
\newcommand{\rideal}{\mathbf r}
\newcommand{\gideal}{\mathbf g}

\title{Dimension quotients as boundary limits: the general case.}
\author{Roman Mikhailov}
\date{}

\begin{document}
\maketitle

\begin{abstract}
We identify, functorially, the boundary limit of a simply defined presentation functor with the dimension quotient, both for groups and for Lie rings over integers. 
\end{abstract}

\section{Introduction}

Let $G$ be a group, $\Z[G]$ its integral group ring, and 
$\gideal$ the augmentation ideal.  The dimension subgroups of $G$ are
\[
 D_n(G)=G\cap(1+\gideal^n).
\]
The lower central series of $G$ is defined by
\[
 \gamma_1(G)=G,\qquad \gamma_2(G)=G'=[G,G],\qquad
 \gamma_{k+1}(G)=[\gamma_k(G),G]\quad(k\geq1).
\]
The dimension subgroups originate in the work of Magnus on free groups and noncommutative
power series \cite{Magnus1937}.  The inclusion
\[
 \gamma_n(G)\subseteq D_n(G)
\]
is immediate from the commutator identity in the integral group ring.
The reverse inclusion holds for $n\leq3$, and the assertion that it
holds in every degree was known as the dimension subgroup conjecture.
Rips disproved the conjecture by constructing a group with
$D_4(G)\neq\gamma_4(G)$ \cite{Rips1972}.  Nontrivial dimension
quotients are now known in every degree $n\geq4$; see
\cite{MikhailovPassi2009} for the history and the classical methods.

The dimension quotients
\[
 G\longmapsto D_n(G)/\gamma_n(G)
\]
are complicated functors.  Their integral torsion reflects derived
functors of nonadditive functors, group homology, and homotopical
phenomena.  Homological and simplicial approaches were developed in
\cite{HartlMikhailovPassi2008}; connections with homotopy groups of
spheres are studied in \cite{BartholdiMikhailov2023}; see also
\cite{MikhailovICM}.

We recall two elementary categorical constructions used below. Let
$Q\colon\mathcal C\to\mathcal A$ be a functor between categories $\mathcal C$ and $\mathcal A$.  A limit of $Q$ is an
object $\Lim_{\mathcal C}\ Q$ together with morphisms
\[
 p_c\colon\Lim_{\mathcal C}\ Q\longrightarrow Q(c),
 \qquad Q(\alpha)\circ p_c=p_d
 \quad(\alpha\colon c\to d).
\]
It is characterized by the following universal property: if
$T\in\mathcal A$ and morphisms $f_c\colon T\to Q(c)$ satisfy
$Q(\alpha)\circ f_c=f_d$ for every $\alpha\colon c\to d$, then there is
a unique morphism
\[
 f\colon T\longrightarrow\Lim_{\mathcal C}\ Q
 \qquad\text{such that}\qquad p_c\circ f=f_c
\]
for every $c$.

Dually, a colimit of $Q$ is an object $\Colim_{\mathcal C}\ Q$
together with morphisms
\[
 j_c\colon Q(c)\longrightarrow\Colim_{\mathcal C}\ Q,
 \qquad j_d\circ Q(\alpha)=j_c
 \quad(\alpha\colon c\to d).
\]
Its universal property says that if morphisms
$g_c\colon Q(c)\to T$ satisfy $g_d\circ Q(\alpha)=g_c$, then there is
a unique morphism
\[
 g\colon\Colim_{\mathcal C}\ Q\longrightarrow T
 \qquad\text{such that}\qquad g\circ j_c=g_c
\]
for every $c$.  These universal properties determine the limit and the
colimit uniquely up to unique isomorphism.

When $\mathcal A$ is the category of groups or of Lie rings, these
objects have concrete descriptions.  The limit is the group or Lie
ring of compatible families
\[
 \Lim_{\mathcal C}\ Q
 =
 \left\{
 (q_c)_{c\in\mathcal C}\in\prod_{c\in\mathcal C}Q(c)
 \ \middle|\
 Q(\alpha)(q_c)=q_d
 \text{ for every }\alpha\colon c\to d
 \right\}.
\]
The colimit is obtained by putting all values of $Q$ together and
imposing the relations prescribed by the morphisms:
\[
 \Colim_{\mathcal C}\ Q
 =
 \left(\coprod_{c\in\mathcal C}Q(c)\right)\Big/\!
 \left\langle
 \iota_c(q)\sim\iota_d\bigl(Q(\alpha)(q)\bigr)
 \ \middle|\
 \alpha\colon c\to d,\ q\in Q(c)
 \right\rangle.
\]
Here the coproduct and the generated relation are taken in
$\mathcal A$: for groups one takes the free product and the normal
closure of the relations, while for Lie rings one takes the coproduct
and the ideal generated by their differences.  Thus the limit retains
families compatible with every morphism, whereas the colimit identifies
all elements that the morphisms force to agree.

For a general category $\mathcal C$, there need not be a
canonical morphism from the limit to the colimit.  Suppose, however,
that $\mathcal C$ is nonempty and connected, meaning that any two
objects can be joined by a zigzag of morphisms.  For every object $c$,
consider
\[
 \theta_c\colon
 \Lim_{\mathcal C}\ Q\xrightarrow{\ p_c\ }Q(c)
 \xrightarrow{\ j_c\ }\Colim_{\mathcal C}\ Q.
\]
If $\alpha\colon c\to d$, then
\[
 j_d\circ p_d
 =j_d\circ Q(\alpha)\circ p_c
 =j_c\circ p_c.
\]
Connectedness therefore implies that $\theta_c$ is independent of
$c$; denote the resulting canonical morphism by
\[
 \theta_Q\colon\Lim_{\mathcal C}\ Q\longrightarrow
 \Colim_{\mathcal C}\ Q.
\]
If $\mathcal A$ is the category of groups or of Lie rings, the image
of this morphism exists.  Following \cite{Mikhailov2024}, define the
boundary limit by
\[
 \Blim_{\mathcal C}\ Q
 =\im\bigl(\theta_Q\colon
 \Lim_{\mathcal C}\ Q\longrightarrow\Colim_{\mathcal C}\ Q\bigr).
\]

A complementary approach is to study functors on categories of free
presentations.  Write $\Pres(G)$ for the category whose objects are
exact sequences
\[
 1\longrightarrow R\longrightarrow F\longrightarrow G\longrightarrow1,
 \qquad F\text{ free}.
\]
The morphisms in $\Pres(G)$ induce the identity on $G$.
A functor built from
$F$ and $R$ usually depends on the chosen presentation.  The limit and
the colimit compare its values over all free presentations. 
Between any two objects there is a morphism, obtained by lifting a free
basis of the source to the target.  Thus $\Pres(G)$ is connected, and
the boundary limit is defined for every group-valued functor on it.
Limits and higher limits over $\Pres(G)$
recover group homology and derived functors of nonadditive functors;
the systematic theory and the associated $\mathbf{fr}$-language are
developed in
\cite{IvanovMikhailov2018}.
Colimits and higher colimits over categories of free presentations were
studied in
\cite{IvanovMikhailovSosnilo2019}.

The boundary-limit construction uses both ends of this presentation
calculus.
For every $n$, the following natural monomorphism was obtained in
\cite{Mikhailov2024}:
\[
 \Blim_{\Pres(G)}\ \frac{F}{R'\gamma_n(F)}
 \lhook\joinrel\longrightarrow\frac{D_n(G)}{\gamma_n(G)},
\]
It was proved there that this map is an isomorphism for $n=4$, and the
equality was proposed in all degrees.  The purpose of this paper is to
prove that equality.

\vspace{.3cm}
\noindent {\bf Main Theorem.} {\it 
For every group $G$ and every $n\geq1$,
\[
 \Blim_{\Pres(G)}\ \frac{F}{R'\gamma_n(F)}
 \cong\frac{D_n(G)}{\gamma_n(G)}
\]
naturally in $G$.}

\vspace{.3cm}

The theorem also gives an intrinsic description of the dimension
quotient which does not use the integral group ring.  Fix one
presentation $c=(F,R)$, put
\[
 Q_n(c)=F/(R'\gamma_n(F)),
\]
and let $i_0,i_1:c\to c\sqcup c$ be the two coproduct inclusions.
Combining the theorem with the equalizer formula for limits gives
\[
 \frac{D_n(G)}{\gamma_n(G)}
 \cong
 \im\left(
  \operatorname{Eq}\bigl(
   Q_n(c)\mathrel{\substack{\xrightarrow{\ Q_n(i_0)\ }\\[-0.4ex]
                              \xrightarrow[\ Q_n(i_1)\ ]{}}}
   Q_n(c\sqcup c)
  \bigr)
  \longrightarrow G/\gamma_n(G)
 \right).
\]
Thus, after $c$ is fixed, the dimension quotient is computed inside
$Q_n(c)$ and $Q_n(c\sqcup c)$, without group rings.  The Lie-ring theorem below
gives the analogous presentation-theoretic description, with
$F/([R,R]+\gamma_n(F))$ as the presentation functor, without using a
universal enveloping algebra.

The proof identifies the entire limit inside a fixed presentation.  Its
essential point is a Fox-calculus formula in the relation module of the
coproduct of two presentations.  The same argument has an additive
form for Lie rings over $\Z$.  Section~2 proves the group theorem, and
Section~3 records the Lie-ring version and the simplifications in its
proof.

\section{Proof of the main Theorem}

The entire scheme of the proof is already present in
\cite{Mikhailov2024}.  For the general statement, however, that paper
does not supply an all-degree proof of the key step: one has to show
that the functor
\[
 (F,R)\longmapsto
 \frac{F\cap(1+\rideal\fideal+\fideal^n)}
      {R'\gamma_n(F)}
\]
is constant on $\Pres(G)$, up to its canonical identifications.  In
\cite{Mikhailov2024} this constancy is established only for $n=4$ by a
degree-specific argument, and the passage to the boundary limit is
formulated there through a higher limit of a non-abelian presentation
functor.  Here we prove the
constancy for every $n$ and give direct proofs of all the remaining
steps.  In particular, we use neither higher limits of group-valued
functors nor any results from the theory of derived limits; every fact
about ordinary limits and colimits needed below is proved directly from
their definitions.

The assertion for $n=1$ is immediate: both sides are trivial.  Fix
$n\geq2$ and retain the notation $Q_n(F,R)=F/(R'\gamma_n(F))$.

\begin{lemma}\label{lem:formal}
Fix a presentation $c=(F,R)$.  If $c\sqcup c$ is its coproduct with itself in
$\Pres(G)$ and $i_0,i_1:c\to c\sqcup c$ are the inclusions, then, for
every group-valued functor $T$,
\[
 \Lim\ T=\{a\in T(c):T(i_0)(a)=T(i_1)(a)\}.
\]
Moreover,
\[
 \Colim\ Q_n\cong G/\gamma_n(G).
\]
\end{lemma}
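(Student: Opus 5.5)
The plan is to prove both assertions directly from the universal properties, using two structural facts about $\Pres(G)$. First, any presentation receives a morphism from any other. Second, the coproduct $c\sqcup c=(F*F,\ \ker(F*F\to G))$ exists in $\Pres(G)$: a pair of morphisms $f,g\colon c\to d$ induces $(f,g)\colon F*F\to F_d$ over $G$, and this map automatically sends the kernel into $R_d$.

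\emph{The equalizer description.} Let $(a_d)$ be a compatible family in $\Lim\, T$. Then $a=a_c$ satisfies $T(i_0)(a)=a_{c\sqcup c}=T(i_1)(a)$. Conversely, suppose $a\in T(c)$ equalizes $T(i_0)$ and $T(i_1)$. For each object $d$ choose a morphism $f\colon c\to d$ and put $a_d=T(f)(a)$.
\begin{itemize}
\item This does not depend on $f$. If $g\colon c\to d$ is another morphism, then $f=(f,g)\circ i_0$ and $g=(f,g)\circ i_1$, so
\[
T(f)(a)=T((f,g))T(i_0)(a)=T((f,g))T(i_1)(a)=T(g)(a).
\]
\item The family is compatible. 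For $\alpha\colon d\to e$ we have $T(\alpha)(a_d)=T(\alpha f)(a)=a_e$, by the independence just shown.
\item The two constructions are mutually inverse. A compatible family is determined by its $c$-component, since $a_d=T(f)(a_c)$ for any $f\colon c\to d$.
\end{itemize}
The group structure is componentwise, so the projection $p_c$ is an isomorphism onto the equalizer.

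\emph{The colimit.} The quotients $Q_n(F,R)\to F/R\gamma_n(F)=G/\gamma_n(G)$ are natural in the presentation. They therefore induce $\pi\colon\Colim\, Q_n\to G/\gamma_n(G)$ with $\pi\circ j_c$ the canonical surjection for every $c$.

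Next, $\im j_c$ does not depend on $c$. Given $c,d$, choose morphisms $\alpha\colon c\to d$ and $\beta\colon d\to c$. Then $j_c=j_d\circ Q_n(\alpha)$ gives $\im j_c\subseteq\im j_d$, and $\beta$ gives the reverse inclusion. The colimit is generated by the images of all the $j_d$, so it equals $\im j_c$ for any single $c$. It then suffices to show that $j_c$ kills $R$; then $j_c$ factors through $G/\gamma_n(G)$, and this factorization is inverse to $\pi$.

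\emph{Killing $R$ (the main step).} Fix $r\in R$. Let $c'=(F*\langle y\rangle,R_{c'})$, where the new free generator $y$ maps to $1\in G$. Define two morphisms $s,t\colon c'\to c$, both equal to the identity on $F$, with $s(y)=r$ and $t(y)=1$. Both are legitimate morphisms in $\Pres(G)$ because $r\mapsto1$ in $G$. In the colimit,
\[
j_c(r)=j_c(Q_n(s)(y))=j_{c'}(y)=j_c(Q_n(t)(y))=1.
\]
Hence $j_c$ factors through $F/R\gamma_n(F)=G/\gamma_n(G)$ and $\pi$ is an isomorphism.

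The main care needed is in this auxiliary-generator trick and in checking that the coproduct really lies in $\Pres(G)$. Both are routine once written out.
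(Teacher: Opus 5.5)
Your proof is correct and follows the paper's argument: the same coproduct trick for the equalizer description, the same comparison map $\Colim\ Q_n\to G/\gamma_n(G)$, and the same auxiliary free generator sent once to $r$ and once to $1$ to show that $R$ dies in the colimit. The only difference is cosmetic: you build the inverse by factoring a single $j_c$ after observing that $\Colim\ Q_n=\im j_c$. The paper instead defines $v\colon G\to\Colim\ Q_n$ through lifts in arbitrary presentations and checks that it is independent of the choices. You also use without comment that $F\to G$ maps $\gamma_n(F)$ onto $\gamma_n(G)$, which the paper states explicitly.
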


\begin{proof}
For any two presentations $d=(F_d,R_d)$ and $e=(F_e,R_e)$, a morphism
$d\to e$ is obtained by choosing, for every element of a free basis of
$F_d$, a lift in $F_e$ of its image in $G$.  In particular, there is a
morphism from $c$ to every object of $\Pres(G)$.

Let $a\in T(c)$ satisfy $T(i_0)(a)=T(i_1)(a)$.  If
$\alpha,\beta:c\to d$, the universal property of the coproduct gives a
morphism $h:c\sqcup c\to d$ with $h i_0=\alpha$ and $h i_1=\beta$;
hence
\[
 T(\alpha)(a)=T(h)T(i_0)(a)=T(h)T(i_1)(a)=T(\beta)(a).
\]
Choose a morphism $\alpha_d:c\to d$ for each $d$ and put
$a_d=T(\alpha_d)(a)$.  If $\lambda:d\to e$, the preceding equality,
applied to the two maps $\lambda\alpha_d,\alpha_e:c\to e$, gives
$T(\lambda)(a_d)=a_e$; thus $(a_d)_d$ is a compatible family.  Conversely,
if $(a_d)_d$ is compatible, then
$T(i_0)(a_c)=a_{c\sqcup c}=T(i_1)(a_c)$.  These two constructions are
inverse and prove the asserted description of the limit.

For a presentation $d=(F_d,R_d)$, the map
$F_d\to G/\gamma_n(G)$ kills $R_d\gamma_n(F_d)$ and hence induces
\[
 u_d:Q_n(F_d,R_d)\longrightarrow G/\gamma_n(G).
\]
The maps $u_d$ commute with all morphisms of presentations, so the
universal property of the colimit gives a homomorphism
\[
 u:\Colim\ Q_n\longrightarrow G/\gamma_n(G).
\]

We construct its inverse.  Fix $d=(F_d,R_d)$ and $r\in R_d$, and form
the presentation
\[
 \widetilde F=F_d*\langle t\rangle\longrightarrow G,
 \qquad t\longmapsto1.
\]
There are two morphisms from this presentation to $d$ which restrict
to the identity on $F_d$ and send $t$ respectively to $r$ and to $1$.
The defining relations of the colimit therefore identify the class of
$r$ with the identity; thus every element of every relation subgroup
$R_d$ has trivial colimit class.

For $g\in G$, choose a presentation $d=(F_d,R_d)$ and a lift
$f\in F_d$ of $g$, and define $v(g)$ to be the colimit class of the
image of $f$ in $Q_n(F_d,R_d)$.  If $f'$ is another lift in $F_d$, then
$f'f^{-1}\in R_d$, so $f$ and $f'$ have the same colimit class.  If
$e=(F_e,R_e)$ is another presentation, choose a morphism
$\alpha:d\to e$; the colimit identifies the class of $f$ with that of
$\alpha(f)$, and $\alpha(f)$ differs from any chosen lift of $g$ in
$F_e$ by an element of $R_e$.  Hence $v(g)$ is independent of every
choice.  Choosing lifts of two elements of $G$ in one presentation
shows that $v:G\to\Colim\ Q_n$ is a homomorphism.

Every epimorphism $F_d\to G$ maps $\gamma_n(F_d)$ onto
$\gamma_n(G)$; this follows by induction on $n$ from the definition of
the lower central series.  Since $\gamma_n(F_d)$ is trivial in
$Q_n(F_d,R_d)$, the homomorphism $v$ kills $\gamma_n(G)$ and induces
\[
 \bar v:G/\gamma_n(G)\longrightarrow\Colim\ Q_n.
\]
The composite $u\bar v$ is the identity by the definition of $v$.
The colimit is generated by the images of the groups $Q_n(F_d,R_d)$,
and on the class of every $f\in F_d$ the composite $\bar v u$ is also
the identity.  Thus $u$ and $\bar v$ are inverse isomorphisms.
\end{proof}

For a presentation $(F,R)$, let
\[
 \fideal=\ker(\Z[F]\to\Z),\qquad
 \rideal=\ker(\Z[F]\to\Z[G])=(R-1)\Z[F].
\]
The ideal $\rideal\fideal+\fideal^n$ is two-sided; hence
$F\cap(1+\rideal\fideal+\fideal^n)$ is normal in $F$.

\begin{lemma}\label{lem:correction}
One has
\[
 R'\gamma_n(F)\subseteq
 F\cap(1+\rideal\fideal+\fideal^n),
 \qquad
 \im\bigl(F\cap(1+\rideal\fideal+\fideal^n)\to G\bigr)=D_n(G).
\]
\end{lemma}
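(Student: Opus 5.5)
Both assertions reduce to standard Fox-calculus and augmentation-ideal facts, and I would treat the inclusion and the image separately.

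\emph{The inclusion.} For $r,s\in R$ one has
\[
[r,s]-1=r^{-1}s^{-1}\bigl((r-1)(s-1)-(s-1)(r-1)\bigr).
\]
Since $r-1\in\rideal$ and $s-1\in\fideal$, each product lies in $\rideal\fideal$, so $[r,s]\in 1+\rideal\fideal$. The set $F\cap(1+I)$ is a normal subgroup for any two-sided ideal $I$ of $\Z[F]$: it is closed under products because $xy-1=(x-1)(y-1)+(x-1)+(y-1)$, and under inverses because $x^{-1}-1=-x^{-1}(x-1)$. Hence $R'\subseteq F\cap(1+\rideal\fideal+\fideal^n)$. The classical fact $\gamma_n(F)\subseteq 1+\fideal^n$ is proved by induction using $[x,y]-1=x^{-1}y^{-1}\bigl((x-1)(y-1)-(y-1)(x-1)\bigr)$, and it gives the second half of the inclusion.

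\emph{The image.} The ring map $\pi\colon\Z[F]\to\Z[G]$ is surjective, carries $\fideal$ onto $\gideal$, carries $\fideal^n$ onto $\gideal^n$, and kills $\rideal$. So if $f\in F$ and $f-1\in\rideal\fideal+\fideal^n$, its image $g$ satisfies $g-1\in\gideal^n$, that is, $g\in D_n(G)$. This gives one inclusion.

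For the reverse inclusion, take $g\in D_n(G)$ and any lift $f\in F$. Then $\pi(f-1)\in\gideal^n=\pi(\fideal^n)$, so
\[
f-1=u+v,\qquad u\in\fideal^n,\quad v\in\ker\pi=\rideal.
\]
Moreover $v\in\fideal$, since $f-1$ and $u$ both lie in $\fideal$. The key point is that $\rideal\cap\fideal$ is only slightly larger than $\rideal\fideal$. Writing $v=\sum_i(r_i-1)a_i$ with $r_i\in R$ and $a_i\in\Z[F]$, and splitting $a_i=\varepsilon(a_i)+(a_i-\varepsilon(a_i))$, we get
\[
v\equiv\sum_i\varepsilon(a_i)(r_i-1)\equiv r-1\pmod{\rideal\fideal},\qquad r=\prod_i r_i^{\varepsilon(a_i)}\in R.
\]
The congruence $\sum_i m_i(r_i-1)\equiv \prod_i r_i^{m_i}-1$ holds modulo $\rideal\fideal$ (indeed modulo $\rideal\fideal$ for products of elements of $R$, since $(r-1)(s-1)\in\rideal\fideal$). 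Then $r^{-1}f$ is another lift of $g$, and
\[
r^{-1}f-1=r^{-1}(f-r)=r^{-1}\bigl((f-1)-(r-1)\bigr)\in r^{-1}\bigl(\fideal^n+\rideal\fideal\bigr)=\rideal\fideal+\fideal^n,
\]
where the last equality uses that the ideal is two-sided. So $g$ lies in the image.

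The one step that needs care is this correction of $v$ from $\rideal\cap\fideal$ into $\rideal\fideal$ by multiplying by an element of $R$; everything else is bookkeeping with the augmentation map.
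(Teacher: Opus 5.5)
Your proof is correct and follows the paper's argument essentially step by step. The inclusion comes from the commutator identity in $\Z[F]$ together with closure of $F\cap(1+I)$ under products, and the image is obtained by correcting an arbitrary lift by an element of $R$, using that $r\mapsto r-1$ is a homomorphism $R\to\rideal/\rideal\fideal$ (the paper's identity $\rideal=(R-1)+\rideal\fideal$); the only cosmetic point is that $\rideal\subseteq\fideal$ always holds, so your observation $v\in\fideal$ and the comparison of $\rideal\cap\fideal$ with $\rideal\fideal$ are superfluous.
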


\begin{proof}
Use the convention $[u,v]=u^{-1}v^{-1}uv$.  In $\Z[F]$,
\[
 [u,v]-1=u^{-1}v^{-1}
 \bigl((u-1)(v-1)-(v-1)(u-1)\bigr).
\]
For $u,v\in R$ the right-hand side lies in
$\rideal\fideal$, so
$R'\subseteq F\cap(1+\rideal\fideal)$.  If
$u-1\in\fideal^k$ and $v\in F$, the same identity gives
$[u,v]-1\in\fideal^{k+1}$; induction therefore yields
$\gamma_n(F)\subseteq F\cap(1+\fideal^n)$.  Finally, for
$a\in R'$ and $b\in\gamma_n(F)$,
\[
 ab-1=(a-1)+(b-1)+(a-1)(b-1)
       \in\rideal\fideal+\fideal^n.
\]
This proves the first inclusion.

We will use
\begin{equation}\label{eq:r-reduction}
 \rideal=(R-1)+\rideal\fideal
\end{equation}
as an equality of subsets of $\Z[F]$.  Indeed, modulo
$\rideal\fideal$,
\[
 \sum_j(r_j-1)a_j
 \equiv\sum_j\varepsilon(a_j)(r_j-1).
\]
The map $R\to\rideal/\rideal\fideal$, $r\mapsto r-1$, is a group
homomorphism from the multiplicative group to the additive group,
because
\[
 rs-1\equiv(r-1)+(s-1)\pmod{\rideal\fideal}.
\]
Hence the last sum is congruent to $\rho-1$ for some $\rho\in R$,
which proves \eqref{eq:r-reduction}.

If $v\in F\cap(1+\rideal\fideal+\fideal^n)$, its image $x\in G$ satisfies
$x-1\in\gideal^n$, so $x\in D_n(G)$.  Conversely, let
$x\in D_n(G)$ and lift it to $w\in F$.  The inverse image of
$\gideal^n$ in $\Z[F]$ is $\rideal+\fideal^n$, because the quotient
map sends $\fideal^n$ onto $\gideal^n$.  Thus
$w-1\in\rideal+\fideal^n$.  By \eqref{eq:r-reduction}, choose
$\rho\in R$ such that
\[
 w-\rho\in\rideal\fideal+\fideal^n.
\]
Then $v=w\rho^{-1}$ maps to $x$ and
\[
 v-1=(w-\rho)\rho^{-1}\in\rideal\fideal+\fideal^n.
\]
Thus $v\in F\cap(1+\rideal\fideal+\fideal^n)$.
\end{proof}

Choose a free basis $X$ of $F$.  The left Fox partial derivatives
$\frac{\partial}{\partial x}:\Z[F]\to\Z[F]$ are determined by
\[
 \frac{\partial y}{\partial x}=\delta_{x,y},\qquad
 \frac{\partial(ab)}{\partial x}
 =\frac{\partial a}{\partial x}\varepsilon(b)
  +a\frac{\partial b}{\partial x}.
\]
These are the usual Fox partial derivatives \cite{Fox1953}.

\begin{lemma}\label{lem:fox}
For $a\in\Z[F]$,
\begin{equation}\label{eq:fundamental}
 a-\varepsilon(a)=\sum_{x\in X}
 \frac{\partial a}{\partial x}(x-1).
\end{equation}
Furthermore,
\begin{equation}\label{eq:derivative-filtration}
 \frac{\partial}{\partial x}
 (\rideal\fideal+\fideal^n)
 \subseteq\rideal+\fideal^{n-1}.
\end{equation}
Finally,
\[
 \Lim\ \frac{\fideal}{\rideal\fideal+\fideal^n}=0.
\]
\end{lemma}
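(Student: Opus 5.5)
The plan is to prove the three assertions in order, with the third reduced to the first two by Lemma~\ref{lem:formal}.

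For \eqref{eq:fundamental}, both sides are additive in $a$, so it suffices to treat $a=w\in F$. Here I will induct on the reduced length of $w$. The cases $w=1$ and $w=x$ are immediate. The case $w=x^{-1}$ follows from $\frac{\partial x^{-1}}{\partial x}=-x^{-1}$, which is obtained by differentiating $x^{-1}x=1$. For a product $w=uv$ I will use the identity $uv-1=(u-1)+u(v-1)$ together with the product rule $\frac{\partial(uv)}{\partial x}=\frac{\partial u}{\partial x}+u\frac{\partial v}{\partial x}$, noting that $\varepsilon(v)=1$.

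For \eqref{eq:derivative-filtration}, I will use the product rule in the special case $b\in\fideal$. There $\varepsilon(b)=0$, so
\[
\frac{\partial(ab)}{\partial x}=a\frac{\partial b}{\partial x}.
\]
Every element of $\rideal\fideal$ is a sum of products $ab$ with $a\in\rideal$ and $b\in\fideal$, so its derivative is a sum of terms $a\frac{\partial b}{\partial x}$, which lie in the right ideal $\rideal$. Every element of $\fideal^n$ is a sum of products $ab$ with $a\in\fideal^{n-1}$ and $b\in\fideal$, so its derivative lies in $\fideal^{n-1}$. Additivity then gives the inclusion.

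For the vanishing of the limit, I apply Lemma~\ref{lem:formal} to the functor $T(F,R)=\fideal/(\rideal\fideal+\fideal^n)$. This functor is abelian-group valued and functorial, since morphisms of presentations send $\fideal$ to $\fideal$ and $\rideal$ to $\rideal$. It therefore suffices to take $a\in\fideal$ with $i_0(a)-i_1(a)\in\tilde\rideal\tilde\fideal+\tilde\fideal^n$ and show $a\in\rideal\fideal+\fideal^n$. Here the coproduct $c\sqcup c$ has free group $\tilde F=F*F'$ with basis $X\sqcup X'$, where $x'$ is a copy of $x$, and $\tilde\rideal,\tilde\fideal$ are the corresponding ideals of $\Z[\tilde F]$.

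\begin{itemize}
\item Differentiate with respect to $x\in X$ in $\Z[\tilde F]$. The element $i_1(a)$ involves only letters from $X'$, so $\frac{\partial i_1(a)}{\partial x}=0$. On the other hand, $\frac{\partial i_0(a)}{\partial x}=i_0\bigl(\frac{\partial a}{\partial x}\bigr)$. By \eqref{eq:derivative-filtration} applied in $\tilde F$, this gives $i_0\bigl(\frac{\partial a}{\partial x}\bigr)\in\tilde\rideal+\tilde\fideal^{n-1}$.
\item Apply the folding morphism $\phi\colon c\sqcup c\to c$ given by $x,x'\mapsto x$, i.e.\ the one with $\phi i_0=\phi i_1=\mathrm{id}$. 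It is a morphism in $\Pres(G)$, so it carries $\tilde\rideal$ into $\rideal$ and $\tilde\fideal$ into $\fideal$. Hence $\frac{\partial a}{\partial x}\in\rideal+\fideal^{n-1}$.
\item Finally, since $\varepsilon(a)=0$, formula \eqref{eq:fundamental} gives $a=\sum_x\frac{\partial a}{\partial x}(x-1)$. This lies in $(\rideal+\fideal^{n-1})\fideal\subseteq\rideal\fideal+\fideal^n$, so the class of $a$ is zero.
\end{itemize}

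The main point needing care is that last step: the splitting into the $X$- and $X'$-variables, and the fact that the folding retraction preserves the ideals. Both follow from $\phi$ being a morphism over $G$. Fox derivatives are only finitely nonzero, so the sum over $X$ is finite.
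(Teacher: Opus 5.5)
Your proposal is correct and follows the paper's proof essentially step for step: the Fox expansion, the product-rule estimate for the filtration, and the equalizer computation in the coproduct presentation by differentiating with respect to $i_0(x)$. The one variation is how you get from $i_0(\partial a/\partial x)\in\tilde{\rideal}+\tilde{\fideal}^{n-1}$ back to $\Z[F]$. You apply the fold morphism $c\sqcup c\to c$, which is a retraction of $i_0$ in $\Pres(G)$ and preserves the ideals; the paper instead maps to $\Z[G]$ and uses that the preimage of $\gideal^{n-1}$ in $\Z[F]$ is $\rideal+\fideal^{n-1}$. Both are valid, and yours avoids that preimage computation.
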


\begin{proof}
The two sides of \eqref{eq:fundamental}, as functions of $a$, agree at
$1$ and on the free generators.  Both are additive, and the value of each at
$ab$ is its value at $a$ multiplied by $\varepsilon(b)$ plus $a$
times its value at $b$.  This rule also determines the value on every
inverse by applying it to $uu^{-1}=1$.  The two sides therefore agree
on every word and, by linearity, on $\Z[F]$.

If $a\in\rideal$ and $b\in\fideal$, then
\[
 \frac{\partial(ab)}{\partial x}
 =a\frac{\partial b}{\partial x}\in\rideal.
\]
Thus $\frac{\partial}{\partial x}(\rideal\fideal)\subseteq\rideal$.
Repeated application of
the product rule gives
$\frac{\partial}{\partial x}(\fideal^n)
\subseteq\fideal^{n-1}$, proving
\eqref{eq:derivative-filtration}.

It remains to prove the vanishing.  Let
\[
 H=F_0*F_1\longrightarrow G,\qquad S=\ker(H\to G),
\]
be the coproduct presentation, and denote its augmentation and relation
ideals by $\mathbf h$ and $\mathbf s$.  If the class of
$a\in\fideal$ equalizes the two coproduct maps, then
\[
 i_0(a)-i_1(a)\in\mathbf s\mathbf h+\mathbf h^n.
\]
Take the Fox derivative with respect to $i_0(x)$.  The derivative of
$i_1(a)$ is zero, and the defining rules for Fox derivatives give
$\partial i_0(a)/\partial i_0(x)=i_0(\partial a/\partial x)$; hence
\eqref{eq:derivative-filtration} in $H$ gives
\[
 i_0\!\left(\frac{\partial a}{\partial x}\right)
 \in\mathbf s+\mathbf h^{n-1}.
\]
After mapping to $\Z[G]$, the image of
$\frac{\partial a}{\partial x}$ lies in
$\gideal^{n-1}$.  Since the inverse image of $\gideal^{n-1}$ in
$\Z[F]$ is $\rideal+\fideal^{n-1}$,
\[
 \frac{\partial a}{\partial x}
 \in\rideal+\fideal^{n-1}
\]
for every $x$.  Formula \eqref{eq:fundamental}, together with
$\varepsilon(a)=0$, now gives
$a\in\rideal\fideal+\fideal^n$.  The equalizer is zero, and
Lemma~\ref{lem:formal} gives the asserted vanishing.
\end{proof}

\begin{lemma}\label{lem:representatives}
Every element of $\Lim\ Q_n$, evaluated at $(F,R)$, is represented by
an element of $F\cap(1+\rideal\fideal+\fideal^n)$.
\end{lemma}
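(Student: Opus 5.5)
The plan is to reduce the statement directly to the vanishing of the limit in Lemma~\ref{lem:fox}, with the link supplied by the inclusion in Lemma~\ref{lem:correction}. Let an element of $\Lim\ Q_n$ be given. By Lemma~\ref{lem:formal}, its component at $c=(F,R)$ is a class in $Q_n(c)=F/(R'\gamma_n(F))$ that equalizes $Q_n(i_0)$ and $Q_n(i_1)$. Choose any representative $w\in F$ of this class. Since every element of the class is represented by $w$, it suffices to show that this particular $w$ already satisfies $w-1\in\rideal\fideal+\fideal^n$.

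First, translate the equalizer condition into the group ring of the coproduct presentation $H=F_0*F_1$, with relation subgroup $S$ and ideals $\mathbf s,\mathbf h$ as in the proof of Lemma~\ref{lem:fox}. The condition says that
\[
 i_0(w)\,i_1(w)^{-1}\in S'\gamma_n(H).
\]
Applying the first inclusion of Lemma~\ref{lem:correction} to the presentation $(H,S)$ gives $i_0(w)i_1(w)^{-1}-1\in\mathbf s\mathbf h+\mathbf h^n$. Multiplying on the right by $i_1(w)$ and using that $\mathbf s\mathbf h+\mathbf h^n$ is a two-sided ideal, we get
\[
 i_0(w-1)-i_1(w-1)=i_0(w)-i_1(w)\in\mathbf s\mathbf h+\mathbf h^n .
\]

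Next, set $a=w-1\in\fideal$. The displayed relation says exactly that the class of $a$ in $\fideal/(\rideal\fideal+\fideal^n)$ equalizes the two maps induced by $i_0$ and $i_1$. Here $(F,R)\mapsto\fideal/(\rideal\fideal+\fideal^n)$ is an abelian-group-valued functor on $\Pres(G)$, so Lemma~\ref{lem:formal} applies to it. Therefore this class defines an element of $\Lim\ \fideal/(\rideal\fideal+\fideal^n)$, and Lemma~\ref{lem:fox} shows that element is zero. Concretely, the Fox-derivative argument given there shows that any $a\in\fideal$ with $i_0(a)-i_1(a)\in\mathbf s\mathbf h+\mathbf h^n$ lies in $\rideal\fideal+\fideal^n$. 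Hence $w-1\in\rideal\fideal+\fideal^n$, so $w\in F\cap(1+\rideal\fideal+\fideal^n)$, as required.

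The main point to watch is that the equalizer condition lives multiplicatively in the group, whereas Lemma~\ref{lem:fox} is additive. The map $w\mapsto w-1$ is not a homomorphism into $\fideal/(\rideal\fideal+\fideal^n)$, so one cannot simply push the limit element forward along such a map. The trick above avoids this: it uses only the single quotient $i_0(w)i_1(w)^{-1}$ and the two-sidedness of the ideal. I therefore expect no further obstacle beyond the Fox-calculus step, and that step is already carried out in Lemma~\ref{lem:fox}.
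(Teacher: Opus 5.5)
Your proposal is correct and is essentially the paper's argument unwound at the equalizer. The paper maps $Q_n$ onto $F/(F\cap(1+\rideal\fideal+\fideal^n))$ via the first inclusion of Lemma~\ref{lem:correction}, then embeds this by the natural pointed-set injection $u\mapsto u-1$ into $\fideal/(\rideal\fideal+\fideal^n)$, and concludes from the vanishing in Lemma~\ref{lem:fox}. Your computation with $i_0(w)i_1(w)^{-1}$ and the two-sided ideal $\mathbf s\mathbf h+\mathbf h^n$ is exactly the naturality check for these maps on the two coproduct inclusions.

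One remark is inaccurate: you claim one cannot push the limit element forward along $w\mapsto w-1$. In fact limits are computed as compatible families of elements, so any natural map of pointed sets induces a map on limits, and that is precisely what the paper uses.
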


\begin{proof}
There is a natural injection of pointed sets
\[
 \frac{F}{F\cap(1+\rideal\fideal+\fideal^n)}
 \longrightarrow
 \frac{\fideal}{\rideal\fideal+\fideal^n},
 \qquad
 u\bigl(F\cap(1+\rideal\fideal+\fideal^n)\bigr)
 \longmapsto u-1+(\rideal\fideal+\fideal^n).
\]
Indeed,
\[
 (u-1)-(v-1)\in\rideal\fideal+\fideal^n
 \quad\Longleftrightarrow\quad
 uv^{-1}-1=(u-v)v^{-1}
 \in\rideal\fideal+\fideal^n.
\]
The displayed maps form an objectwise injection of pointed-set-valued
functors.  Hence they induce an injection on limits: explicitly, a
compatible family whose image is the zero family must be the base-point
family because every displayed map is injective.  Lemma~\ref{lem:fox}
therefore implies
\[
 \Lim\ \frac{F}{F\cap(1+\rideal\fideal+\fideal^n)}=*.
\]
The map
\[
 Q_n\longrightarrow
 \frac{F}{F\cap(1+\rideal\fideal+\fideal^n)}
\]
is defined by Lemma~\ref{lem:correction}.  A limit element maps to the
base point, and hence has a representative in
$F\cap(1+\rideal\fideal+\fideal^n)$.
\end{proof}

\begin{lemma}\label{lem:fox-coefficients}
Let $F=F(X)$, let $\varepsilon:\Z[F]\to\Z$ be the augmentation, and
let $\varphi:F\to G$ be a homomorphism.  For every family
$(h_x)_{x\in X}$ of elements of $\Z[F]$, there is a unique additive map
$D:\Z[F]\to\Z[F]$ such that
\[
 D(ab)=D(a)\varepsilon(b)+aD(b),
 \qquad D(x)=h_x
\]
for all $a,b\in\Z[F]$ and $x\in X$.  It is given by
\begin{equation}\label{eq:fox-theorem}
 D(a)=\sum_{x\in X}
 \frac{\partial a}{\partial x}\,h_x,
 \qquad a\in\Z[F].
\end{equation}
For each $a$ the sum is finite.

Consequently, if $A$ is a left $\Z[G]$-module and
$d:F\to A$ satisfies
\[
 d(uv)=d(u)+\varphi(u)d(v),
 \qquad u,v\in F,
\]
then
\begin{equation}\label{eq:fox-with-coefficients}
 d(u)=\sum_{x\in X}
 \varphi_*\!\left(\frac{\partial u}{\partial x}\right)d(x),
 \qquad u\in F,
\end{equation}
where $\varphi_*:\Z[F]\to\Z[G]$ is induced by $\varphi$.
\end{lemma}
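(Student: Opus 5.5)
Uniqueness comes first. The group ring $\Z[F]$ is spanned additively by the elements of $F$, so an additive $D$ satisfying the Leibniz rule is determined by its values on group elements. The rule also pins down these values on all of $F$ once they are known on $X$. Indeed, $D(1)=D(1\cdot1)=D(1)+D(1)$ forces $D(1)=0$. Then $0=D(xx^{-1})=D(x)+xD(x^{-1})$ gives $D(x^{-1})=-x^{-1}h_x$. Induction on word length, using $D(ab)=D(a)+aD(b)$ for $a,b\in F$ (since $\varepsilon(b)=1$ there), then determines $D$ on every reduced word.

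For existence I would not build $D$ from scratch but simply verify that the formula \eqref{eq:fox-theorem} works. Put $D(a)=\sum_x\frac{\partial a}{\partial x}h_x$.

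First, the sum is finite: $\frac{\partial u}{\partial x}=0$ for every generator $x$ not occurring in the word $u$. This follows by the same induction on word length from the defining rules, and it extends to finite linear combinations.

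Second, $D$ is additive because each $\frac{\partial}{\partial x}$ is.

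Third, the Leibniz rule holds. Using the product rule for the Fox derivatives,
\[
 D(ab)=\sum_x\Bigl(\frac{\partial a}{\partial x}\varepsilon(b)+a\frac{\partial b}{\partial x}\Bigr)h_x
 =D(a)\varepsilon(b)+aD(b).
\]
Here the right multiplication by $h_x$ commutes past the integer $\varepsilon(b)$, and left multiplication by $a$ factors out.

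Fourth, $D(y)=\sum_x\delta_{x,y}h_x=h_y$.

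Taking all $h_x$ to be the derivatives of $x$ themselves is consistent with \eqref{eq:fundamental} of Lemma~\ref{lem:fox}; that identity is the special case $h_x=x-1$.

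For the consequence, I would transport $d$ to a derivation on $\Z[F]$ and reuse the uniqueness argument. Regard $A$ as a left $\Z[F]$-module via $\varphi_*$. The extension $\tilde d(\sum n_u u)=\sum n_u d(u)$ then satisfies $\tilde d(ab)=\tilde d(a)\varepsilon(b)+a\cdot\tilde d(b)$. This is checked on basis elements $a,b\in F$ by the hypothesis on $d$, and extended bilinearly.

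The candidate is $d'(u)=\sum_x\varphi_*\!\left(\frac{\partial u}{\partial x}\right)d(x)$. This $d'$ obeys the same crossed-homomorphism rule, by the product rule for $\frac{\partial}{\partial x}$ with $\varepsilon(v)=1$, and it agrees with $d$ on $X$.

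The uniqueness induction from the first paragraph works verbatim with values in any left $\Z[F]$-module. It gives $d=d'$ on $F$, which is \eqref{eq:fox-with-coefficients}.

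The only step requiring care is this uniqueness and induction in the module-valued setting, including inverses of generators. It is routine, but it is where the crossed-homomorphism identity $d(1)=0$ and $d(u^{-1})=-\varphi(u)^{-1}d(u)$ must be derived first. There is no genuine obstacle.
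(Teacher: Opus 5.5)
Your proposal is correct, and for the consequence it is the paper's argument: define the right-hand side, use the Fox product rule to check that it is a $\varphi$-derivation agreeing with $d$ on $X$, and conclude by the uniqueness induction from $d(1)=0$ and $d(x^{-1})=-\varphi(x)^{-1}d(x)$ (your extension $\tilde d$ to $\Z[F]$ is never actually needed). For the first assertion the paper simply cites Fox's theorem, whereas you verify uniqueness and the formula directly; your existence step presupposes the basic derivatives $\partial/\partial x$ and their product rule, which the paper also takes from Fox, so neither argument constructs those derivatives from scratch.
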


\begin{proof}
The first assertion, including existence, uniqueness, and
formula~\eqref{eq:fox-theorem}, is precisely Fox's theorem
\cite[\S2, Theorem, (2.2)]{Fox1953}.

Let $d_0(u)$ denote the right-hand side of
\eqref{eq:fox-with-coefficients}.  It is a finite sum because a reduced
word for $u$ involves only finitely many generators.  For $u,v\in F$,
the Fox product rule gives
\[
 \frac{\partial(uv)}{\partial x}
 =\frac{\partial u}{\partial x}
  +u\frac{\partial v}{\partial x}.
\]
Therefore
\[
 \begin{aligned}
 d_0(uv)
 &=\sum_{x\in X}
   \varphi_*\!\left(\frac{\partial u}{\partial x}
   +u\frac{\partial v}{\partial x}\right)d(x)\\
 &=d_0(u)+\varphi(u)d_0(v).
 \end{aligned}
\]
Thus $d_0$ is a $\varphi$-derivation, and
$d_0(x)=d(x)$ for every $x\in X$.

A $\varphi$-derivation is determined by its values on $X$: it satisfies
$d(1)=0$ and
\[
 d(x^{-1})=-\varphi(x)^{-1}d(x),
\]
the latter identity following from $0=d(xx^{-1})$.  Hence two
$\varphi$-derivations which agree on $X$ agree on $X^{\pm1}$ and then,
by induction on word length, on all of $F$.  Consequently $d_0=d$, which
proves \eqref{eq:fox-with-coefficients}.
\end{proof}

\medskip

\begin{lemma}\label{lem:stability}
Every $v\in F\cap(1+\rideal\fideal+\fideal^n)$ represents an element of
$\Lim\ Q_n$.
\end{lemma}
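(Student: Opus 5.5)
The plan is to use the equalizer description of $\Lim$ from Lemma~\ref{lem:formal}. Let $H=F_0*F_1$ be the coproduct presentation with relation subgroup $S$. Write $u_0=i_0(u)$ and $u_1=i_1(u)$ for $u\in F$. It then suffices to prove that
\[
 w=v_0v_1^{-1}\in S'\gamma_n(H)
\]
for every $v\in F\cap(1+\rideal\fideal+\fideal^n)$. Note that $w\in S$, since $v_0$ and $v_1$ have the same image in $G$. The idea is to treat $u\mapsto u_0u_1^{-1}$ as a derivation into the relation module $S/S'$ and expand it by Lemma~\ref{lem:fox-coefficients}.

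\emph{Step 1: the derivation.} Let $A=S/S'$, written additively. It is a left $\Z[G]$-module, with $g$ acting by conjugation $s\mapsto hsh^{-1}$ for any lift $h\in H$ of $g$. Define $d:F\to A$ by $d(u)=u_0u_1^{-1}S'$. The identity
\[
 (uv)_0(uv)_1^{-1}=u_0\,(v_0v_1^{-1})\,u_0^{-1}\cdot u_0u_1^{-1}
\]
shows that $d(uv)=d(u)+\varphi(u)d(v)$, where $\varphi:F\to G$ is the presentation map and $u_0$ is used as the lift of $\varphi(u)$. By \eqref{eq:fox-with-coefficients},
\[
 d(v)=\sum_{x\in X}\varphi_*\!\left(\frac{\partial v}{\partial x}\right)t_x,\qquad t_x=x_0x_1^{-1}S'.
\]

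\emph{Step 2: filtration.} Since $v-1\in\rideal\fideal+\fideal^n$, \eqref{eq:derivative-filtration} gives $\partial v/\partial x\in\rideal+\fideal^{n-1}$. Hence $\varphi_*(\partial v/\partial x)\in\gideal^{n-1}$, because $\rideal$ maps to $0$ and $\fideal^{n-1}$ maps onto $\gideal^{n-1}$. It therefore suffices to show that $\gideal^{n-1}A\subseteq\gamma_n(H)S'/S'$.

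\emph{Step 3: the key step.} As an abelian group, $\gideal^{n-1}$ is spanned by products $\pm(g_1-1)\cdots(g_{n-1}-1)$. Choose lifts $h_j\in H$ of the $g_j$. For $s\in S$, the element $(g-1)\cdot sS'$ is the class of $hsh^{-1}s^{-1}$, which is a commutator of $s$ with $h^{\pm1}$. Iterating, $(g_1-1)\cdots(g_{n-1}-1)\cdot sS'$ is the class of an $(n-1)$-fold iterated commutator of $s$ with the $h_j^{\pm1}$, and such a commutator lies in $\gamma_n(H)$. Moreover $\gamma_n(H)S'/S'$ is a subgroup of $A$, and it is $G$-stable because $\gamma_n(H)$ is normal in $H$. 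Together with Step~2, this gives $d(v)\in\gamma_n(H)S'/S'$, that is, $w\in S'\gamma_n(H)$.

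I expect the main obstacle to be the bookkeeping in Step~3. One has to match the module action and the sign conventions with the commutator convention $[u,v]=u^{-1}v^{-1}uv$. One also has to note that each intermediate iterated commutator still lies in $S$, since $S$ is normal, so that the iteration stays inside $A$. No new input beyond Lemmas~\ref{lem:formal}, \ref{lem:fox} and~\ref{lem:fox-coefficients} should be needed.
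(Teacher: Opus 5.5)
Your proposal is correct and is essentially the paper's proof. It uses the same coproduct presentation and the same difference derivation into the relation module $S/S'$, expanded via Lemma~\ref{lem:fox-coefficients}. It also uses the same commutator-weight argument for $\gideal^{n-1}(S/S')$, which the paper packages as the inductive inclusion $\gideal^m M\subseteq (S\cap\gamma_{m+1}(H))S'/S'$; the paper writes $i_1(u)i_0(u)^{-1}$ where you write $u_0u_1^{-1}$, which makes no difference.

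One small correction to wording: in Step~3 the subgroup of $A$ you need is $(S\cap\gamma_n(H))S'/S'$, that is, the intersection of $A$ with $\gamma_n(H)S'/S'$. The group $\gamma_n(H)S'/S'$ itself need not lie in $A$. With that reading the argument is complete, and the $G$-stability remark is not actually needed.
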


\begin{proof}
Use the coproduct presentation
\[
 H=F_0*F_1\longrightarrow G,\qquad S=\ker(H\to G),
\]
with inclusions $i_0,i_1:F\to H$.  By Lemma~\ref{lem:formal}, we only
have to prove that the two images of $v$ agree in
$Q_n(H,S)=H/(S'\gamma_n(H))$, or equivalently that
\[
 i_1(v)i_0(v)^{-1}\in S'\gamma_n(H).
\]

The relation module $M=S/S'$ is the linearization of the kernel $S$.
We write it additively and give it its usual left $\Z[G]$-module
structure by conjugation.  Namely, if $h\in H$ lifts $g\in G$, then
$g\cdot(sS')=hsh^{-1}S'$.  This is well defined because changing $h$
by an element of $S$ changes the conjugate only by an element of $S'$.

For $u\in F$, define its difference in the two copies of $F$ by
\[
 \Delta(u)=i_1(u)i_0(u)^{-1}S'\in M.
\]
It belongs to $M$ because $i_0(u)$ and $i_1(u)$ have the same image in
$G$.  The proof is the following chain of implications:
\[
 v-1\in\rideal\fideal+\fideal^n
 \ \Longrightarrow\ 
 \overline{\frac{\partial v}{\partial x}}\in\gideal^{n-1}
 \ \Longrightarrow\ 
 \Delta(v)\in\gideal^{n-1}M
 \ \Longrightarrow\ 
 i_1(v)i_0(v)^{-1}\in S'\gamma_n(H).
\]
We prove the three implications separately.

\emph{Step 1.}  Since
$v-1\in\rideal\fideal+\fideal^n$, Lemma~\ref{lem:fox},
\eqref{eq:derivative-filtration}, gives
\[
 \frac{\partial v}{\partial x}
 =\frac{\partial(v-1)}{\partial x}
 \in\rideal+\fideal^{n-1}.
\]
After projecting to $\Z[G]$, the $\rideal$-part vanishes and hence
\[
 \overline{\frac{\partial v}{\partial x}}\in\gideal^{n-1}
 \qquad(x\in X).
\]
This proves the first implication.

\emph{Step 2.}  Let $\varphi:F\to G$ be the presentation map.  Before
passing to $S/S'$, one has
\[
 i_1(ab)i_0(ab)^{-1}
 =\bigl(i_1(a)i_0(a)^{-1}\bigr)
  i_0(a)\bigl(i_1(b)i_0(b)^{-1}\bigr)i_0(a)^{-1}.
\]
Hence
\[
 \Delta(ab)=\Delta(a)+\overline a\,\Delta(b),
\]
so $\Delta$ is a $\varphi$-derivation.  Applying
Lemma~\ref{lem:fox-coefficients} with $A=M$ and $d=\Delta$ gives
\begin{equation}\label{eq:fox-difference}
 \Delta(u)
 =\sum_{x\in X}
 \overline{\frac{\partial u}{\partial x}}\,\Delta(x).
\end{equation}
By Step~1, every coefficient in
\eqref{eq:fox-difference}, with $u=v$, belongs to
$\gideal^{n-1}$.  Therefore
\[
 \Delta(v)\in\gideal^{n-1}M.
\]
This proves the second implication.

\emph{Step 3.}  Multiplication by the augmentation ideal raises
commutator weight in the relation module.  Indeed, let
$s\in S\cap\gamma_{m+1}(H)$, let $g\in G$, and choose a lift $h\in H$
of $g$.  Then
\[
 (g-1)(sS')=hsh^{-1}s^{-1}S'=[h^{-1},s^{-1}]S'.
\]
The last representative lies in $S\cap\gamma_{m+2}(H)$.  Since the
elements $g-1$ generate $\gideal$ additively, this proves
\[
 \gideal\,\frac{(S\cap\gamma_{m+1}(H))S'}{S'}
 \subseteq
 \frac{(S\cap\gamma_{m+2}(H))S'}{S'}.
\]
Starting with $M=(S\cap\gamma_1(H))S'/S'$ and applying this inclusion
inductively gives
\[
 \gideal^mM\subseteq
 \frac{(S\cap\gamma_{m+1}(H))S'}{S'}
 \qquad(m\geq0).
\]
Taking $m=n-1$ and using Step~2 shows that $\Delta(v)$ is represented
modulo $S'$ by an element of $S\cap\gamma_n(H)$.  Hence
\[
 i_1(v)i_0(v)^{-1}\in S'\gamma_n(H).
\]
This proves the third implication.

Both $S'$ and $\gamma_n(H)$ are normal in $H$, so this is the required
equality of the two images of $v$ in $Q_n(H,S)$.  Its class therefore
belongs to the equalizer and hence to $\Lim\ Q_n$.
\end{proof}

\begin{proof}[Proof of the main Theorem]
Lemmas~\ref{lem:representatives} and~\ref{lem:stability} give directly
\[
 \Lim\ Q_n=
 \frac{F\cap(1+\rideal\fideal+\fideal^n)}{R'\gamma_n(F)}
 \quad\text{inside }Q_n(F,R).
\]
By Lemma~\ref{lem:formal}, the colimit is $G/\gamma_n(G)$, so the
boundary limit is the image of
$F\cap(1+\rideal\fideal+\fideal^n)$ in this quotient.
Lemma~\ref{lem:correction} says that its image in $G$ is $D_n(G)$.
Hence
\[
 \Blim\ Q_n=D_n(G)/\gamma_n(G).
\]
This is an equality of canonical subgroups of $G/\gamma_n(G)$.
If $\varphi:G\to H$ is a homomorphism, then the induced group-ring map
sends $\gideal_G^n$ into $\gideal_H^n$, while
$\varphi(\gamma_n(G))\subseteq\gamma_n(H)$.  Thus the induced map
$G/\gamma_n(G)\to H/\gamma_n(H)$ carries the displayed canonical
subgroup into the corresponding subgroup for $H$.  Since the colimit
isomorphism in Lemma~\ref{lem:formal} is induced by the presentation
maps to these quotients, the resulting square commutes.  Hence the
isomorphism is natural in $G$.
\end{proof}

\section{The Lie-ring case}

Let $L$ be a Lie ring over $\Z$, and let
$\Pres_{\mathrm{Lie}}(L)$ be its category of free presentations
\[
 0\longrightarrow R\longrightarrow F\longrightarrow L
 \longrightarrow0.
\]
The lower central series of $L$ is
\[
 \gamma_1(L)=L,\qquad
 \gamma_{k+1}(L)=[\gamma_k(L),L]\quad(k\geq1),
\]
and $[R,R]$ is the derived Lie ideal of $R$.  Let $U(L)$ be the
universal enveloping ring of $L$ and
$\omega(L)$ its augmentation ideal.  By the Poincar\'e--Birkhoff--Witt
Theorem over a Dedekind domain
\cite[Theorem~3.2(ii)]{BartholdiPassi2015}, the canonical map
$L\to U(L)$ is injective; we use it to regard $L$ as a Lie subring of
$U(L)$.
The dimension series of $L$ is defined by
\[
 \delta_n(L)=L\cap\omega(L)^n,
\]
(see \cite{BartholdiPassi2015}.)

\begin{theorem}\label{thm:lie}
For every Lie ring $L$ over $\Z$ and every $n\geq1$,
\[
 \Blim_{\Pres_{\mathrm{Lie}}(L)}\ 
 \frac{F}{[R,R]+\gamma_n(F)}
 \cong\frac{\delta_n(L)}{\gamma_n(L)}
\]
naturally in $L$.
\end{theorem}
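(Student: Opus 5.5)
We follow the scheme of Section~2, with every multiplicative identity replaced by its additive counterpart. Let $\mathbf u=U(F)$ be the universal enveloping ring of the free Lie ring $F=F(X)$; it is the free associative ring (tensor ring) on $X$. Write $\fideal$ for its augmentation ideal and $\rideal=R\,\mathbf u=\mathbf u R\,\mathbf u=\ker(U(F)\to U(L))$, the two-sided ideal generated by $R$. The corrected functor is $(F,R)\mapsto F\cap(\rideal\fideal+\fideal^n)$.

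The first step is the Lie analogue of Lemma~\ref{lem:formal}, which we prove verbatim. In $\Pres_{\mathrm{Lie}}(L)$ the free Lie ring on the disjoint union of bases gives the coproduct. The limit of any functor is the equalizer of $i_0,i_1$. For the colimit, we use the presentation $F_d\sqcup\langle t\rangle$ with $t\mapsto0$ and the two maps $t\mapsto r$, $t\mapsto0$; this kills every $R_d$ in the colimit. Since $\gamma_n(F_d)\to\gamma_n(L)$ is onto, we obtain $\Colim\cong L/\gamma_n(L)$.

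The second step is the analogue of Lemma~\ref{lem:correction}. The inclusion $[R,R]\subseteq\rideal\fideal$ holds because $[r,s]=rs-sr$. The inclusion $\gamma_n(F)\subseteq\fideal^n$ is immediate. The identity $\rideal=R+\rideal\fideal$ holds because $\rideal=R+\mathbf u R\fideal+R\fideal$, and $\mathbf u R\subseteq R\mathbf u+[\ ,\ ]$-terms in $R$; concretely, $ar=ra+[a,r]$ for $a\in F$. Using this identity, exactly as before, a lift $w$ of $x\in\delta_n(L)$ is corrected to $v=w-\rho\in F\cap(\rideal\fideal+\fideal^n)$. The inverse image of $\omega(L)^n$ is $\rideal+\fideal^n$ because $U(F)\to U(L)$ is surjective. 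Hence the image of $F\cap(\rideal\fideal+\fideal^n)$ in $L$ is $\delta_n(L)$, where PBW injectivity is used to identify $L\subseteq U(L)$.

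The third step is the vanishing of $\Lim\ \fideal/(\rideal\fideal+\fideal^n)$, now on abelian groups. The Fox calculus on the tensor ring is simpler: every $a\in\fideal$ has a unique expression $a=\sum_x\partial_x(a)\,x$ with respect to the right-factorization $\fideal=\bigoplus_x\mathbf u x$. Each derivation $\partial_x$ maps $\rideal\fideal+\fideal^n$ into $\rideal+\fideal^{n-1}$. The argument in the coproduct then runs word for word as in Lemma~\ref{lem:fox}. Because $F\to\fideal/(\rideal\fideal+\fideal^n)$ is now additive, the argument of Lemma~\ref{lem:representatives} simply becomes injectivity of $F/(F\cap(\rideal\fideal+\fideal^n))\to\fideal/(\rideal\fideal+\fideal^n)$. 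Consequently every element of $\Lim$ has a representative in $F\cap(\rideal\fideal+\fideal^n)$.

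The main obstacle is the stability step, the analogue of Lemma~\ref{lem:stability}. We set $H=F_0\sqcup F_1$, $S=\ker(H\to L)$, and $M=S/[S,S]$, a left $U(L)$-module via $\bar h\cdot s=[h,s]$. The map $\Delta(u)=i_1(u)-i_0(u)\in M$ is additive and satisfies $\Delta([a,b])=\bar a\Delta(b)-\bar b\Delta(a)$. We extend $\Delta$ to $\mathbf u$ as the derivation $D(ab)=D(a)\varepsilon(b)+aD(b)$ with values $D(x)=\Delta(x)$, and check that $D$ restricted to $F$ is $\Delta$; this is the Lie analogue of Lemma~\ref{lem:fox-coefficients}, verified on brackets. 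This gives $\Delta(v)=\sum_x\overline{\partial_x v}\,\Delta(x)$. Since $\overline{\partial_x v}\in\omega(L)^{n-1}$, we get $\Delta(v)\in\omega(L)^{n-1}M$. Finally, $\omega(L)\cdot\bigl((S\cap\gamma_{m+1}H)+[S,S]\bigr)\subseteq(S\cap\gamma_{m+2}H)+[S,S]$, because $\omega(L)$ is generated as a left ideal by $L$ and $\bar h\cdot s=[h,s]$. It follows that $i_1(v)-i_0(v)\in[S,S]+\gamma_n(H)$. Combining this with the second step yields $\Blim=\delta_n(L)/\gamma_n(L)$ inside $L/\gamma_n(L)$, and naturality follows as in the group case.
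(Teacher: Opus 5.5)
Your proposal is correct and follows the paper's proof step for step: the equalizer and colimit computation, the correction identity $\rideal=R+\rideal\fideal$ for the relation ideal, vanishing of the limit via coefficient maps in the coproduct presentation, and stability via the relation module $S/[S,S]$ filtered by $S\cap\gamma_{m+1}(H)$. The only difference is a mirror-image convention. You use $\rideal\fideal$ with last-letter coefficients and a left $U(L)$-action, as in the group case, whereas the paper uses $IJ$ with first-letter coefficients and a right $U(L)$-action; the anti-automorphism of $U(F)$ extending $x\mapsto -x$ on $F$ interchanges the two setups.
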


\begin{proof}
The case $n=1$ is immediate, so let $n\geq2$.  The formal parts of the
group proof, namely the equalizer description of the limit and the
calculation of the colimit, apply verbatim in the category of Lie rings,
with the identity element replaced by $0$.
We explain the places where the algebraic argument becomes simpler.

Put
\[
 I=\omega(F),\qquad
 J=\ker\bigl(U(F)\longrightarrow U(L)\bigr).
\]
First, the universal property of the enveloping ring gives
\[
 J=U(F)RU(F),
\]
the two-sided ideal generated by $R$
\cite[Proposition~4.2]{BartholdiPassi2015}.  Since
$U(F)=\Z\oplus I$ as additive groups, for $u,v\in U(F)$ and $r\in R$
one has
\[
 urv=\varepsilon(u)rv+\bigl(u-\varepsilon(u)\bigr)rv,
\]
where the first summand belongs to $RU(F)$ and the second to $IJ$.
Hence
\[
 J\subseteq RU(F)+IJ.
\]
Choose a free generating set $X$ of $F$.  For $r\in R$ and $x\in X$,
\[
 rx=xr+[r,x]\equiv[r,x]\pmod{IJ},
\]
because $xr\in IJ$ and $[r,x]\in R$.  Induction on the length of a
word gives $rw\in R+IJ$ for every $r\in R$: if $w=xw'$, then
\[
 rw=x(rw')+[r,x]w',
\]
the first summand lies in $IJ$ because $rw'\in J$, and the second is
congruent modulo $IJ$ to an element of $R$ by the induction hypothesis
applied to $[r,x]\in R$ and the shorter word $w'$.  Since
$U(F)=\Z\langle X\rangle$, linearity gives
$RU(F)\subseteq R+IJ$.  Conversely, $R\subseteq J$ and $IJ\subseteq J$
because $J$ is a two-sided ideal.  Therefore
\begin{equation}\label{eq:lie-correction}
 J=R+IJ
\end{equation}
as additive groups.  This is the first simplification: in the group
case a lift has to be corrected multiplicatively by an element of the
relation subgroup; here one simply subtracts an element of $R$.

The inclusion
\[
 [R,R]+\gamma_n(F)\subseteq F\cap(IJ+I^n)
\]
is immediate: $[r,s]=rs-sr\in IJ$ for $r,s\in R$, and an iterated
bracket of length $n$ lies in $I^n$.  If
$\pi\colon U(F)\to U(L)$ is the quotient map, then
$\pi(I)=\omega(L)$ and therefore $\pi(I^n)=\omega(L)^n$.  For a
surjective homomorphism, the inverse image of the image of an ideal is
the sum of that ideal with the kernel; hence
\[
 \pi^{-1}\bigl(\omega(L)^n\bigr)
   =J+I^n=R+IJ+I^n.
\]
Thus an element of $\delta_n(L)$ has a lift $f\in F$ of the form
$f=r+a$, where $r\in R$ and $a\in IJ+I^n$.  The element
$f-r\in F\cap(IJ+I^n)$ is another lift.  Consequently,
\begin{equation}\label{eq:lie-image}
 \im\bigl(F\cap(IJ+I^n)\longrightarrow L\bigr)=\delta_n(L).
\end{equation}

Second, ordinary first-letter coefficients replace Fox calculus.
In the free associative ring $U(F)=\Z\langle X\rangle$, every
$a\in U(F)$ has the unique
expansion
\begin{equation}\label{eq:first-letter}
 a=\varepsilon(a)+\sum_{x\in X}x\,\partial_x(a).
\end{equation}
Only finitely many summands are nonzero.  If $b\in I$ and
$c\in U(F)$, uniqueness of the expansion gives
$\partial_x(bc)=\partial_x(b)c$.  Taking $c\in J$, and then taking
$c\in I^{n-1}$ and using linearity, proves
\begin{equation}\label{eq:lie-partials}
 \partial_x(IJ)\subseteq J,\qquad
 \partial_x(I^n)\subseteq I^{n-1}.
\end{equation}

We use this to identify the possible representatives of a limit
element.  Let $H=F_0*F_1\to L$ be the coproduct presentation, with
inclusions $i_0,i_1$, and write
\[
 I_H=\omega(H),\qquad
 J_H=\ker\bigl(U(H)\to U(L)\bigr).
\]
Suppose that $a\in I$ represents an element of the equalizer of the two
maps on $I/(IJ+I^n)$.  Then
\[
 i_0(a)-i_1(a)\in I_HJ_H+I_H^n.
\]
Here $U(H)$ is freely generated by the two copies of $X$; hence the
$i_0(x)$-coefficient of $i_1(a)$ is zero and that of $i_0(a)$ is
$i_0(\partial_x(a))$.  Taking this coefficient and using
\eqref{eq:lie-partials} gives
\[
 i_0\bigl(\partial_x(a)\bigr)\in J_H+I_H^{n-1}.
\]
After mapping to $U(L)$ and pulling back to $U(F)$, we obtain
\[
 \partial_x(a)\in J+I^{n-1}\qquad(x\in X).
\]
Since $a\in I$, formula \eqref{eq:first-letter} now yields
$a\in IJ+I^n$.  Hence
\begin{equation}\label{eq:lie-vanishing}
 \Lim\ \frac{I}{IJ+I^n}=0.
\end{equation}
The inclusion
$[R,R]+\gamma_n(F)\subseteq F\cap(IJ+I^n)$ proved above defines the
quotient map used in the next argument.
The natural injection
\[
 \frac{F}{F\cap(IJ+I^n)}\longrightarrow\frac{I}{IJ+I^n}
\]
induces an injection on limits of the underlying pointed-set-valued
functors.  Its target limit is zero by \eqref{eq:lie-vanishing}, so
\[
 \Lim\ \frac{F}{F\cap(IJ+I^n)}=0.
\]
The quotient map from $F/([R,R]+\gamma_n(F))$ to the last functor now
shows that every element of
\(\Lim\ F/([R,R]+\gamma_n(F))\) is represented by an element of
$F\cap(IJ+I^n)$.

It remains to show that all these representatives are stable.  Put
$S=\ker(H\to L)$ and $M=S/[S,S]$.  The rule
\[
 (s+[S,S])\cdot\ell=[s,\widetilde\ell]+[S,S]
\]
makes $M$ a right $U(L)$-module.  Indeed, changing
$\widetilde\ell$ by an element of $S$ changes the bracket by an element
of $[S,S]$, and the Jacobi identity shows both that $[S,S]$ is stable
under bracketing with $H$ and that this right Lie action extends to
$U(L)$.  If a bar denotes the image of
$U(F)$ in $U(L)$, expansion on the free generators gives, in $M$,
\begin{equation}\label{eq:lie-difference}
 i_1(v)-i_0(v)
 =\sum_{x\in X}\bigl(i_1(x)-i_0(x)\bigr)
   \cdot\overline{\partial_x(v)}.
\end{equation}
Indeed, both sides agree on $X$ and satisfy
\[
 d([a,b])=d(a)\cdot\bar b-d(b)\cdot\bar a,
\]
because
$\partial_x([a,b])=\partial_x(a)b-\partial_x(b)a$.  Since $F$ is the
free Lie ring on $X$, induction on Lie monomials shows that two such
derivations agreeing on $X$ agree on all of $F$; this proves
\eqref{eq:lie-difference}.

For $m\geq0$, let
\[
 M_m=\frac{(S\cap\gamma_{m+1}(H))+[S,S]}{[S,S]}\subseteq M.
\]
If $s\in S\cap\gamma_{m+1}(H)$ and $\ell\in L$, then
$[s,\widetilde\ell]\in S\cap\gamma_{m+2}(H)$.  Each $M_m$ is stable
under the right $U(L)$-action, and $\omega(L)$ is the two-sided ideal
generated by $L$; consequently $M_m\omega(L)\subseteq M_{m+1}$.
Since $M_0=M$, induction gives
\begin{equation}\label{eq:lie-module-filtration}
 M\omega(L)^m\subseteq
 \frac{(S\cap\gamma_{m+1}(H))+[S,S]}{[S,S]}
 \qquad(m\geq0).
\end{equation}
For $v\in F\cap(IJ+I^n)$, equation \eqref{eq:lie-partials} gives
$\partial_x(v)\in J+I^{n-1}$, and hence
$\overline{\partial_x(v)}\in\omega(L)^{n-1}$.  Equation
\eqref{eq:lie-difference} and \eqref{eq:lie-module-filtration} now give
\[
 i_1(v)-i_0(v)\in[S,S]+\gamma_n(H)
\]
by \eqref{eq:lie-module-filtration}.  Thus every such $v$ is stable.
Together with \eqref{eq:lie-vanishing}, this proves that the limit,
evaluated at $(F,R)$, is
\[
 \frac{F\cap(IJ+I^n)}{[R,R]+\gamma_n(F)}.
\]
The colimit is $L/\gamma_n(L)$ by the Lie-ring version of
Lemma~\ref{lem:formal}.  In view of \eqref{eq:lie-image}, the image of
the limit in the colimit is precisely
$\delta_n(L)/\gamma_n(L)$.

Finally, a homomorphism of Lie rings sends $\omega(L)^n$ into the
corresponding $n$th power of the augmentation ideal and sends
$\gamma_n(L)$ into the corresponding lower-central term.  All maps in
the construction above are induced by the canonical maps from free
presentations to $L/\gamma_n(L)$, so the displayed identification
commutes with homomorphisms of Lie rings.  It is therefore natural in
$L$.

\end{proof}

\bigskip
\noindent
Saint Petersburg State University\\
7/9 Universitetskaya nab., St.~Petersburg, 199034 Russia

\end{document}